\newtheorem{theorem}{Theorem}[section]
\newtheorem{proposition}[theorem]{Proposition}
\newtheorem{corollary}[theorem]{Corollary}
\theoremstyle{definition}
 \newtheorem{definition}[theorem]{Definition}
\newtheorem*{definition*}{Definition}
\theoremstyle{remark}
\newtheorem{remark}[theorem]{Remark}
\numberwithin{equation}{section}
\newcommand{\al}{\alpha}
\newcommand{\be}{\beta}
\newcommand{\de}{\delta}
\newcommand{\ep}{\varepsilon}
\newcommand{\ga}{\gamma}
\newcommand{\vp}{\varphi}
\newcommand{\De}{\Delta}
\newcommand{\Si}{\Sigma}
\newcommand{\Om}{\Omega}
\def\RR{\mathbb{R}}
\def\ZZ{\mathbb{Z}}
\def\TT{\mathbb{T}}
\newcommand{\cD}{{\mathcal D}}
\newcommand{\cH}{{\mathcal H}}
\newcommand{\cK}{{\mathcal K}}
\newcommand{\cR}{{\mathcal R}}
\newcommand\cZ{\mathcal Z}
\newcommand{\pd}{\partial}
\newcommand\minus\backslash
\newcommand\lan\langle
\newcommand\ran\rangle
\DeclareMathOperator\Div{div}
\def\Harm{\cH_\Om }
\def\Harmt{\cH_{\Om^t} }
\renewcommand\leq\leqslant
\renewcommand\geq\geqslant
\newlength{\intwidth}
\newcommand\loc{_{\mathrm{loc}}}
\newcommand\BOm{\overline\Om}
 \DeclareMathOperator\curl{curl}
\newcommand\restr{\!\upharpoonright_{\pd\Om}}\newcommand\restrt{\!\upharpoonright_{\pd\Om^t}}
\newcommand\restrr{\!\upharpoonright_{\pd D}}
\newcommand\Dom{\cD_\Om}
\DeclareMathOperator\BS{BS}
\begin{document}

\title[Non-existence of axisymmetric optimal domains]{Non-existence
  of axisymmetric optimal domains with smooth boundary for the first curl eigenvalue}

\author{Alberto Enciso}
\address{Instituto de Ciencias Matem\'aticas, Consejo Superior de
  Investigaciones Cient\'\i ficas, 28049 Madrid, Spain}
\email{aenciso@icmat.es, dperalta@icmat.es}

\author{Daniel Peralta-Salas}

%
%
\begin{abstract}
  We say that a bounded domain~$\Om$ is optimal for the first positive
  curl eigenvalue $\mu_1(\Om)$ if $\mu_1(\Om)\leq \mu_1(\Om')$ for any
  domain~$\Om'$ with the same volume. In spite of the fact that
  $\mu_1(\Om)$ is uniformly lower bounded in terms of the volume, in
  this paper we prove that there are no axisymmetric optimal (and even
  locally minimizing) domains with $C^{2,\al}$ boundary that
  satisfies a mild technical assumption. As a particular case, this
  rules out the existence of $C^{2,\al}$~optimal axisymmetric domains with a convex
  section. An analogous result holds in the case of the first negative
  curl eigenvalue.
\end{abstract}
\maketitle

\section{Introduction}

Given a bounded domain $\Om\subset\RR^3$, a classical result of Giga and Yoshida~\cite{Giga} (see also~\cite{Hiptmair}) states that curl defines a self-adjoint
operator on $\Om$ with compact resolvent whose domain~$\Dom$ is dense in the space
\[
\cK(\Om)=\Big\{ v\in L^2(\Om): \Div v=0\,,\; v\restr\cdot
N=0\,,\; \int_\Om
v\cdot h\, dx=0 \; \text{for all } h\in\Harm \Big\}\,.
\]
Here $\Harm $ denotes the space of harmonic fields on~$\Om$
that are tangent to the boundary, and $N$ is the outward-pointing normal to the boundary (of course, $v\restr\cdot
N=0$ has to be understood in the sense of traces). In this work we will only consider
domains which are smooth enough, e.g., with a $C^{2,\al}$ boundary $\pd\Om:=\overline \Om\backslash \Om$.

The eigenfunctions of curl are then vector fields on~$\Om$ that satisfy
\begin{align}\label{P}
\curl u_k=\mu_k(\Om)\, u_k\quad\text{in }\Om\,,
\end{align}
and belong to $\Dom$. It is well known that there are infinitely many positive and negative
eigenvalues $\{\mu_k(\Om)\}_{k=-\infty}^\infty$ of curl, which tend to
$\pm\infty$  as
$k\to\pm\infty$ and which one can label so that
\[
\cdots \leq \mu_{-3}(\Om)\leq \mu_{-2}(\Om)\leq \mu_{-1}(\Om) <0<\mu_1(\Om)\leq \mu_2(\Om)\leq \mu_3(\Om)\leq \cdots
\]
We will refer to~$\mu_1(\Om)$ and $\mu_{-1}(\Om)$ as the {\em first
  positive eigenvalue}\/ and {\em first negative eigenvalue}\/ of the curl
operator, respectively. Notice that their multiplicities can be higher than~1. One should recall that, when considered in absolute value, the first curl eigenvalue admits a variational
formulation: indeed,
\[
\min\{\mu_{-1}^2(\Om),\mu_1^2(\Om)\}= \inf_{v\in \Dom\backslash \{0\}} \frac{\int_\Om|\curl v|^2\, dx}{\int_\Om |v|^2\, dx}\,.
\]

In this work we are interested in domains that minimize the first (positive or negative) curl eigenvalue among any other domain with the same volume. More precisely, we introduce the following:

\begin{definition}
A $C^{2,\al}$ bounded domain $\Om$ is {\em optimal}\/ (respectively,
{\em locally optimal}\/) for the first
positive
curl eigenvalue if
\[
\mu_1(\Om)\leq \mu_1(\Om')
\]
for any $C^{2,\al}$ domain~$\Om'$ of the same volume (respectively, for
any $C^{2,\al}$-small perturbation~$\Om'$ of~$\Om$ with the same
volume). Optimal and locally optimal domains for the first negative
curl eigenvalue are defined analogously: $|\mu_{-1}(\Om)|\leq |\mu_{-1}(\Om')|$.
\end{definition}

The analysis of optimal domains, including questions of existence,
uniqueness and regularity, is
a classical subject in spectral theory. For the Dirichlet Laplacian,
the Faber--Krahn inequality implies that the ball is the only optimal
domain for the first eigenvalue. Even in the case of higher eigenvalues
of the Dirichlet Laplacian, the situation is much less clear-cut, and
in general optimal domains are only known to exist in the class of
quasi-open sets~\cite{BDM,Henrot}; in
fact, the proof that the corresponding eigenfunctions are
Lipschitz continuous is very recent~\cite{Bu15}. See~\cite{Henrot} for a general account on the subject.

Even though the curl operator plays a preponderant role in different
physical contexts such as fluid mechanics and electromagnetic theory,
the literature about the corresponding optimal domains is surprisingly
scarce. For example, using numerical computations it is easy to show that the ball is not a (locally) optimal
domain. A different but somehow related optimization problem was
considered in~\cite{Cantarella}; there, the stress is on the
Biot--Savart operator, which is an inverse of sorts for the curl
operator that appears in the definition of the helicity. In this
paper, the authors obtain necessary conditions for the existence of
optimal domains for this problem and conjecture that there should not
exist any smooth axisymmetric optimal domains. As we will see later,
there are interesting similarities between this problem and the
question of optimal domains for the curl operator. For related minimization problems in the context of helicity of compactly supported vector field in $\RR^3$ see e.g.~\cite{Freedman,LS}.

Our goal in this paper is to show that there are no $C^{2,\al}$-smooth
axisymmetric
optimal domains for the first positive (or negative) eigenvalue of
curl operator, modulo an additional technical assumption. Of course,
given the symmetries of the problem, axisymmetric domains are a
particularly relevant class of sets to analyze. We stress that $\mu_1(\Om)$ and $-\mu_{-1}(\Om)$ are lower bounded by a constant that only depends on $|\Om|$ as we shall prove in Appendix~\ref{Appendix}, but probably this bound cannot be achieved (at least within the class of smooth enough domains).

Let us start by introducing some notation. Consider cylindrical
coordinates $(z,r,\varphi)\in \RR\times (0,\infty)\times \TT$
on~$\RR^3$, where $\TT:=\RR/(2\pi\ZZ)$. We will henceforth assume that
the axis of symmetry of the domain~$\Om$ is
the $z$-axis, that is, the line $\cZ:=\{r=0\}$. Away
from the axis, the domain
$\Om$ can be written as
\[
\Om\backslash \cZ=\{(z,r,\vp): (z,r)\in D\,,\; \vp\in\TT\}\,.
\]
We will refer to the planar domain $D\subset \RR\times (0,\infty)$ as
the {\em section}\/ of~$\Om$. The distance from a
point~$x$ to the $z$-axis, which is just its coordinate~$r$, will be
denoted by $r(x)$. We denote by
\[
\delta_\Om:=\inf\{r(x) : x\in\Om\}
\]
the distance from the domain~$\Om$ to the $z$-axis. If $\de_\Om=0$, it
means that $\overline\Om$ intersects the $z$-axis. In contrast, if
$\de_\Om>0$, one can write this domain in terms of its section as
$\Om= D\times\TT$, and the closure of~$D$ is contained in the
half-space $\RR\times(0,\infty)$. We will use the notation
\[
\cR_\Om:=\{x\in\partial\Om: r(x)=\delta_\Om\}\,,\qquad
\cR_D:=\{(z,r)\in\pd D: r=\de_\Om\}
\]
for the set of points on the boundary of the domain~$\Om$, or of its
section~$D$, that are closest to the symmetry axis.

\begin{theorem}\label{T.main}
Let $\Om$ be an axisymmetric bounded domain with a $C^{2,\al}$
boundary. If $\Om$ does not intersect the $z$-axis, let us further assume that
the boundary~$\pd\Om$ and the set of innermost boundary
points~$\cR_\Om$ are connected. Then the domain~$\Om$ is not locally optimal for the first positive or
negative curl eigenvalue.
\end{theorem}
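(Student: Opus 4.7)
The plan is to derive a rigid boundary constraint from first-order optimality and then to rule it out, in the axisymmetric setting, by unique continuation.

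First, a Hadamard-type shape derivative computation for the variational characterization of $\mu_1(\Om)^2=\inf\int_\Om|\curl v|^2/\int_\Om|v|^2$ shows that, under a volume-preserving perturbation $\Om^t=(\id+t\xi)(\Om)$ with normalized axisymmetric first eigenfunction $u$,
\[
\tfrac{d}{dt}\mu_1(\Om^t)\big|_{t=0}=-C\,\mu_1(\Om)\int_{\pd\Om}|u|^2\,\xi\cdot N\,dS
\]
for a universal constant $C>0$. Local optimality plus the volume constraint $\int_{\pd\Om}\xi\cdot N\,dS=0$ then forces, via Lagrange multipliers, the overdetermined boundary condition $|u|^2\equiv K$ on $\pd\Om$ for some $K\geq0$; an analogous statement holds for $\mu_{-1}$.

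Next I pass to a stream-function description in the case $\de_\Om>0$. Writing $u=u_P+u_\varphi\,e_\varphi$ with poloidal part $u_P=r^{-1}(\pd_r\psi\,e_z-\pd_z\psi\,e_r)$, the equation $\curl u=\mu_1 u$ forces $u_\varphi=(\mu_1\psi+c_0)/r$ for a constant~$c_0$ and, after the shift $\tilde\psi:=\psi+c_0/\mu_1$, reduces to the Grad--Shafranov equation
\[
L\tilde\psi+\mu_1^2\,\tilde\psi=0\ \text{in } D,\qquad L:=\pd_z^2+\pd_r^2-r^{-1}\pd_r,
\]
with $u_\varphi=\mu_1\tilde\psi/r$ and $|u|^2=(|\nabla\tilde\psi|^2+\mu_1^2\tilde\psi^2)/r^2$. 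The tangency $u\cdot N=0$ becomes $\tilde\psi=\text{const}$ on each component of $\pd D$, and since $\pd\Om$ (equivalently $\pd D$) is assumed connected, this reduces to $\tilde\psi=\tilde c$ on $\pd D$ for a single constant~$\tilde c$. The Euler--Lagrange condition from the first step then translates into the overdetermined identity
\[
|\nabla\tilde\psi|^2+\mu_1^2\,\tilde c^2=Kr^2\ \text{on }\pd D.
\]

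The core of the proof is to show that no smooth axisymmetric $D$ can support such a $\tilde\psi$. At every point of $\cR_D$, the outward normal is $-e_r$, so $\pd_z\tilde\psi=0$ and the overdetermined relation forces $(\pd_r\tilde\psi)^2=K\de_\Om^2-\mu_1^2\tilde c^2$, the sign of $\pd_r\tilde\psi$ being constant along each connected component of $\cR_D$ by continuity and Hopf's lemma. Under the connectedness hypothesis on $\cR_\Om$---in the generic situation where $\cR_D$ is an arc of positive length on the real-analytic, non-characteristic line $\{r=\de_\Om\}$---this provides constant Cauchy data $(\tilde\psi,\pd_r\tilde\psi)=(\tilde c,\pm\sqrt{K\de_\Om^2-\mu_1^2\tilde c^2})$ for $L+\mu_1^2$ along the arc. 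Since $L$ has real-analytic coefficients on $\{r>0\}$ and is elliptic there, Holmgren's uniqueness theorem, applied to $\tilde\psi$ and each of its $z$-translates $\tilde\psi(z+\varepsilon,\cdot)$, shows that $\tilde\psi$ is $z$-independent in a tube around $\cR_D$; Aronszajn's unique continuation for $L+\mu_1^2$ then propagates this $z$-invariance throughout the connected domain~$D$. Hence $\tilde\psi=\tilde\psi(r)$, and combined with $\tilde\psi=\tilde c$ on $\pd D$ and the surjectivity of $r\colon\pd D\to[\de_\Om,\max_{\pd D}r]$, we deduce $\tilde\psi\equiv\tilde c$ in $D$, i.e.\ $u\equiv 0$---contradicting that $u$ is an eigenfunction.

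The main obstacles I anticipate lie in the two degenerate configurations not covered by the argument above. First, when $\cR_D$ reduces to a single point (so $\cR_\Om$ is a single circle, still connected), one lacks a positive-length curve of Cauchy data and must instead work with higher-order boundary jets at the innermost point together with the specific structure of~$L$---possibly supplemented by the harmonic-field orthogonality constraint $\int_D\tilde\psi/r\,dz\,dr=0$ coming from the definition of $\Dom$, which already forces $\tilde\psi$ to change sign whenever $\tilde c=0$. Second, the case $\de_\Om=0$ requires a separate argument: regularity of $u$ on the axis forces $\tilde\psi=0$ along the axis portion of $\overline{\pd\Om}$, and one must combine this with analytic continuation while handling the singular coefficient $-1/r$ of~$L$. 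Presumably these technical subtleties are precisely what the connectedness hypothesis and the "mild technical assumption" of the abstract are designed to accommodate.
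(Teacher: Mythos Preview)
Your setup---the Hadamard derivative yielding $|u|^2\equiv K$ on $\pd\Om$, then the Grad--Shafranov reduction to $L\tilde\psi+\mu_1^2\tilde\psi=0$ with the overdetermined boundary data---matches the paper. The core contradiction argument, however, is both different from the paper's and incomplete in precisely the case that matters.

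Your Holmgren/unique-continuation step requires $\cR_D$ to contain an arc of positive length in $\{r=\de_\Om\}$. But the theorem only assumes $\cR_\Om$ is \emph{connected}, and the generic instance of this (as the paper itself remarks) is $\cR_D$ a single point---exactly where your argument produces nothing. You flag this as an ``obstacle'', but it is not a side technicality: it is the main case. The paper avoids unique continuation altogether. The ingredient you mention only parenthetically---the harmonic-field orthogonality $\int_\Om u_1\cdot h\,dx=0$ with $h=r^{-1}e_\varphi$---is in fact the crux: it gives $\int_D\psi/r\,dz\,dr=0$, and one integration by parts turns this into
\[
\int_{\pd D}\frac{\pd_\nu\psi}{r}\,dS=0.
\]
Since $\psi$ is constant on the connected curve $\pd D$, the restriction $\nabla\psi\restrr$ is purely normal; the identity above forces $\pd_\nu\psi$ to change sign (it cannot vanish identically, else $|u_1|^2\restr$ would fail to be a positive constant). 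On a topological circle a sign change forces the zero set of $\pd_\nu\psi$ to have at least two components. Your overdetermined relation $|\nabla\psi|^2=Kr^2-\mu_1^2\tilde c^{\,2}$ then identifies this zero set exactly with $\cR_D$, so $\cR_D$ is disconnected---contradiction. No positive arc-length is ever used.

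Two further points you leave open are settled in the paper. First, you take the first eigenfield to be axisymmetric without justification; the paper shows the axisymmetrization $Su_1$ is nontrivial via a geodesic/flux argument: if $u_{1\varphi}$ vanished somewhere on $\pd\Om$, the geodesic property of the integral curves of $u_1\restr$ would force a meridian circle to be an orbit, and Stokes' theorem on the disk it bounds, combined with $\int_\Om u_1\cdot h=0$, gives $0\neq 0$. Second, the case $\de_\Om=0$ needs no separate analytic treatment: $|u_1|^2\restr=c>0$ together with Poincar\'e--Hopf forces every component of $\pd\Om$ to be a torus, so an axisymmetric $\Om$ cannot meet the axis.
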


Note that the condition that $\cR_\Om$ be connected is generic if $\de_\Om>0$. In
fact, it is easy to check that for any $C^{2,\al}$ axisymmetric domain $\Om$ that does not intersect the $z$-axis,
there is an axisymmetric domain $\Om'$ that is $C^{2,\al}$-close to
$\Om$ such that $\cR_{\Om'}$ consists of a single point. An immediate consequence is that there are no optimal domains in the
quite natural class of axisymmetric domains whose section~$D$ is convex:

\begin{corollary}\label{C.1}
There are no $C^{2,\alpha}$-smooth locally optimal domains for the first positive or
negative curl eigenvalue that are axisymmetric with a convex section. In particular, the ball is not a locally optimal domain.
\end{corollary}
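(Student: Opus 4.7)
The plan is to derive this from Theorem~\ref{T.main} by verifying its hypotheses whenever the axisymmetric domain has a convex section. I would split the argument into two cases according to whether $\de_\Om=0$ or $\de_\Om>0$.

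If $\de_\Om=0$, that is, $\overline\Om$ meets the symmetry axis, Theorem~\ref{T.main} applies without any additional assumption and immediately yields that $\Om$ is not locally optimal.

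Suppose instead that $\de_\Om>0$. Then $\overline D$ is a compact planar convex body contained in the open half-plane $\RR\times(0,\infty)$. Since $D$ is convex with non-empty interior, $\pd D$ is a Jordan curve, and therefore $\pd\Om=\pd D\times\TT$ is a topological torus, in particular connected. Next, $\cR_D$ is by definition the set of minimizers on $\overline D$ of the linear functional $(z,r)\mapsto r$; on a planar convex body such a minimizer set is a face of $\overline D$, which is either a single point or a closed line segment, and is in either case connected. Consequently $\cR_\Om=\cR_D\times\TT$ is connected, the technical hypothesis of Theorem~\ref{T.main} is satisfied, and the theorem once more gives the desired conclusion.

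Finally, for a ball centered on the $z$-axis the section in cylindrical coordinates is a half-disk, which is convex and touches the symmetry axis, so the first case applies directly and the ball is not locally optimal. The argument is essentially a bookkeeping check of the hypotheses of Theorem~\ref{T.main}; the only non-tautological ingredient is the elementary observation that a linear functional on a compact planar convex body attains its minimum on a connected face, so I do not anticipate any serious obstacle.
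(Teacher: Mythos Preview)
Your argument is correct and is precisely the route the paper intends: Corollary~\ref{C.1} is stated as an immediate consequence of Theorem~\ref{T.main}, and you have simply spelled out why a convex section forces the connectedness hypotheses on $\pd\Om$ and $\cR_\Om$ when $\de_\Om>0$. The observation that the minimizer set of a linear functional on a planar convex body is a point or a segment is exactly the elementary fact needed, and your treatment of the ball via $\de_\Om=0$ is fine.
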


The paper is organized as follows. In Section~\ref{S.domains} we will obtain a necessary condition for a $C^{2,\al}$ bounded domain to be locally optimal, cf.~Proposition~\ref{T.necessary}. In particular, in Corollary~\ref{C.geod} we provide a topological obstruction for a domain (not necessarily axisymmetric) to be locally optimal; this result complements Corollary~\ref{C.1} above. The proof of Theorem~\ref{T.main} is presented in Section~\ref{S.proof}, where we also show that $\mu_1(\Om)$ is simple and the corresponding eigenfield $u_1$ is axisymmetric if $\Om$ is a locally optimal axisymmetric domain with $C^{2,\al}$ connected boundary (see Corollary~\ref{C.simple}). Finally, we include Appendix~\ref{Appendix} where we prove that the first positive and negative curl eigenvalues are lower bounded by a constant that only depends on the volume of the domain; while this is reminiscent of the Faber--Krahn inequality for the Dirichlet Laplacian, it is quite different from it in the sense that the bound we obtain is not sharp and probably it cannot be achieved.

\section{A necessary condition for optimal domains}
\label{S.domains}

In this section we prove that any curl eigenfield of a (locally) optimal domain associated with the first positive (or negative) eigenvalue must have constant pointwise norm on the boundary (the same constant for all the connected components). In turn this will imply that the boundary of the domain consists of tori and that the (unparametrized) integral curves of the eigenfield are geodesics with respect to the induced metric. These results are analogous to those obtained for the helicity maximization problem considered in~\cite[Theorem D]{Cantarella}. We remark that the domain is not assumed to be axisymmetric in this section.

Let us first recall that the curl eigenfields are smooth $u_k\in C^\infty\loc(\Om)$. Indeed, since $u_k$ also satisfies (component-wise)
\[
\De u_k+\mu^2_k u_k=0
\]
in~$\Om$, the result follows by elliptic
regularity (in fact, they are real-analytic in $\Om$). Moreover, since $\partial\Om$ is $C^{2,\alpha}$, it is standard that $u_k$ is $C^{1,\alpha}$ up to the boundary~\cite{JMPA}. The same results hold for harmonic fields. In particular, we conclude that $u_k\restr$ and $h\restr$ belong to $C^{1,\al}(\partial\Omega)$. We will use this boundary regularity property in what follows without further mention.


\begin{proposition}\label{T.necessary}
  If $\Om$ is a $C^{2,\al}$ locally optimal domain for the first
  positive curl eigenvalue, then any
  eigenfield~$u_1$ with eigenvalue $\mu_1(\Om)$ satisfies that its
  pointwise norm on~$\pd\Om$ is constant, i.e., $|u_1|^2\restr=c$ for some $c>0$. The analogous statement holds if $\Om$ is a locally optimal
 domain for the first negative eigenvalue.
\end{proposition}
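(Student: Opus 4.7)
The plan is to derive a Hadamard-type shape derivative formula for $\mu_1$ and then exploit local optimality to extract the pointwise constancy of $|u_1|^2$ on $\pd\Om$.

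First, I would set up the variational deformation. Given an arbitrary smooth compactly supported vector field $V$ on~$\RR^3$, let $\Phi_t$ be its flow and set $\Om_t:=\Phi_t(\Om)$. For $|t|$ small, $\Om_t$ is a $C^{2,\al}$-small perturbation of~$\Om$, so the local optimality hypothesis applies. The requirement that the deformation preserves volume to first order is $\int_{\pd\Om} V\cdot N\,dS=0$; any second-order volume drift can be corrected by a trivial rescaling that affects $\mu_1$ only at second order and is therefore irrelevant.

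Second, I would compute the Hadamard derivative. Suppose first that $\mu_1(\Om)$ is simple, with an eigenfield $u_1$ normalized so that $\|u_1\|_{L^2(\Om)}=1$. Pulling the eigenvalue equation $\curl u_1^t=\mu_1(\Om_t)u_1^t$ on $\Om_t$ back to~$\Om$ via $\Phi_t$, differentiating at $t=0$, pairing in $L^2(\Om)$ with $u_1$, integrating by parts using the boundary condition $u_1\cdot N=0$, and applying the Reynolds transport theorem to the normalization integral, one arrives---after the standard cancellations that occur in Hadamard arguments (compare~\cite[Section~6]{Cantarella} for the closely related Biot--Savart problem)---at a formula of the form
\[
\frac{d}{dt}\bigg|_{t=0}\mu_1(\Om_t) \;=\; -\,\mu_1(\Om)\int_{\pd\Om} |u_1|^2\, V\cdot N\, dS,
\]
in which the only boundary contribution is a multiple of $|u_1|^2$.

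Third, I would invoke the optimality condition. Since $\mu_1(\Om_t)\geq\mu_1(\Om)$ for $|t|$ small, this derivative is non-negative, and applying the same formula with $-V$ yields the opposite inequality, so the derivative vanishes. Hence
\[
\int_{\pd\Om} |u_1|^2\, V\cdot N\, dS = 0 \qquad \text{for every }V\text{ with }\int_{\pd\Om} V\cdot N\, dS=0.
\]
Because the normal trace $V\cdot N$ can be realized as an arbitrary sufficiently regular function on $\pd\Om$ of zero mean, this forces $|u_1|^2$ to take a single constant value $c$ on the \emph{whole} boundary, and not merely on each connected component, since the volume constraint couples all components. To see that $c>0$, note that if $u_1$ vanished on~$\pd\Om$, then since $u_1$ solves the elliptic system $\curl u_1=\mu_1 u_1$, $\Div u_1=0$ (equivalently the vector Helmholtz equation $\De u_1+\mu_1^2 u_1=0$), with boundary Cauchy data effectively prescribed by the equation, unique continuation would force $u_1\equiv 0$, contradicting its being an eigenfield.

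The main obstacle is the possible multiplicity of $\mu_1(\Om)$. When the eigenvalue is multiple, $t\mapsto\mu_1(\Om_t)$ is only Lipschitz, with one-sided directional derivatives equal to the minimum (respectively maximum) of the Hadamard expression over the $\mu_1(\Om)$-eigenspace. I would then apply the argument to each unit eigenfield $u$ in this eigenspace, concluding that $|u|^2$ is constant on $\pd\Om$ for every such $u$, with a common constant (otherwise one could choose $V$ to strictly decrease the min over the eigenspace and violate optimality). The first-negative-eigenvalue case is completely analogous after replacing $\mu_1$ by $|\mu_{-1}|$.
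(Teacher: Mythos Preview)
Your strategy and the paper's differ substantially. You aim to compute the shape derivative of $t\mapsto\mu_1(\Om_t)$ directly and then invoke optimality. The paper never differentiates the eigenvalue. Instead it fixes a single eigenfield $u_1$, pushes it forward to $v^t:=\Phi^t_* u_1$ on $\Om^t$ (taking $V$ divergence-free near $\overline\Om$ so the volume is preserved exactly), and works with the Rayleigh quotient $R(t):=\langle (\curl)^{-1} v^t, v^t\rangle/\|v^t\|^2$ for the \emph{inverse} operator. The two technical steps are: (i) proving $v^t\in\cK(\Om^t)$, in particular orthogonality to the moving space $\Harmt$ of harmonic fields, done via an ODE argument for $f_j(t):=\int_{\Om^t}v^t\cdot h_j^t$; and (ii) showing the numerator $\int_{\Om^t}\al^t\wedge d\al^t$ is \emph{identically} $1/\mu_1$ by a differential-forms helicity computation. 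Then $R(t)\leq 1/\mu_1(\Om^t)\leq 1/\mu_1=R(0)$ whenever $\Om$ is optimal, and $R$ is manifestly smooth, so $R'(0)=0$ follows with no eigenvalue perturbation theory and no multiplicity discussion at all. The $c>0$ argument also differs: the paper extends $u_1$ by zero and applies the Liouville theorem for Beltrami flows on $\RR^3$, whereas you invoke unique continuation (which can indeed be made to work once one checks that $u_1|_{\pd\Om}=0$ together with $\curl u_1=\mu_1 u_1$ and $\Div u_1=0$ forces all of $\nabla u_1$ to vanish on $\pd\Om$).

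Your route could in principle succeed, but as written there is a genuine gap at the sentence ``after the standard cancellations\dots one arrives at a formula of the form\dots''. For the Dirichlet Laplacian this is classical; for curl on $\cK(\Om)$ it is not. The constraint space $\cK(\Om)$ depends on the domain through the harmonic fields (first Betti number piece), the first eigenvalue is neither a pure min nor a pure max of a quadratic form (positive and negative spectrum coexist), and the reference you cite treats the Biot--Savart/helicity optimization rather than the curl spectral problem. Pulling back the eigenvalue equation and tracking the harmonic-field constraint under deformation is precisely the nontrivial content the paper works out in steps (i)--(ii) above, and your sketch does not supply it. Your multiplicity paragraph inherits the same problem: the min/max formula for one-sided directional derivatives would itself have to be established for this operator, and the claim that all eigenfields share a common boundary constant is asserted rather than argued. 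The paper's test-function approach is what buys freedom from both of these issues.
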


\begin{proof}
Let $V$ be a smooth bounded vector field on $\RR^3$ which is assumed to be divergence-free in a neighborhood of $\overline\Om$, and let
$\Phi^t$ denote its time-$t$ flow, which is a diffeomorphism of $\RR^3$. Let us now define
\[
\Om^t:=\Phi^t(\Om)\,,\qquad v^t:=\Phi^t_* u_1\,,
\]
where $\Phi^t_* u_1$ denotes the push-forward of the vector
field~$u_1$ along the diffeomorphism~$\Phi^t$ and we normalize the
eigenfunction so that its $L^2(\Om)$ norm is $\|u_1\|=1$. Obviously, $|\Om^t|=|\Om|$ if $|t|<\ep_0$, with $\ep_0>0$ a small enough constant, because $V$ is divergence-free in a neighborhood of $\overline\Om$. Also notice that $v^t$ depends smoothly on $t$.

Let $N^t$ be the outward-point unit normal to the domain $\Om^t$ and let $N^{t\flat}$ denote the 1-form dual to~$N^t$ via the
Euclidean metric. Since the kernel of $N^{t\flat}$ at a point $x\in\pd\Om^t$ is the tangent plane $T_x(\pd\Om^t)$, and the diffeomorphism $\Phi^t$ maps the tangent plane $T_{\Phi^{-t}(x)}(\pd\Om)$ onto $T_x(\pd\Om^t)$, it is immediate that $N^{t\flat}=F_t\Phi^t_*N^\flat$ for some positive function $F_t$ on $\pd\Om^t$.
Then considering the coupling of $N^{t\flat}$ with $v^t$, it is obvious that
\begin{equation}\label{Nv}
N^t\cdot v^t\restrt= N^{t\flat}(v^t\restrt)=F_tN^\flat(u_1\restr)\circ\Phi^{-t}= F_t(N\cdot u_1\restr)\circ\Phi^{-t}=0\,;
\end{equation}
where we have used that $\Phi^t_*N^\flat(\Phi^t_*u_1\restr)=N^\flat(u_1\restr)\circ \Phi^{-1}$ and $N\cdot u_1\restr=0$. Moreover, the
derivative of $v^t$ with respect to~$t$ is given by the Lie derivative
\[
\pd_t v^t= (v^t\cdot\nabla) V-(V\cdot \nabla) v^t\,,
\]
which we can rewrite (if $|t|<\ep_0$) using vector calculus identities and the fact
that $\Div V=0$ in a neighborhood of $\overline \Om$ as
\begin{equation}\label{curlv}
\pd_t v^t= \curl (V\times v^t) -(\Div v^t)\, V\,.
\end{equation}
We can take the divergence in this equation to find a
linear equation for $\Div v^t$:
\[
\pd_t \Div v^t= -(V\cdot \nabla) \Div v^t\,.
\]
Since $\Div v^t=0$ at $t=0$, we infer that
\begin{equation}\label{divvt}
\Div v^t=0
\end{equation}
for all~$|t|<\ep_0$.

Armed with these facts, we can now prove that $v^t\in \cK(\Om^t)$ for
all~$|t|<\ep_0$, which amounts to showing that
\[
\int_{\Om^t} v^t\cdot \nabla\psi\, dx=0\qquad\text{and}\qquad \int_{\Om^t} v^t\cdot h\, dx=0
\]
for all $\psi\in H^1(\Om^t)$ and all $h\in\Harmt$ by the Hodge
decomposition theorem. Let us start with
the first integral, where, by a density argument, one can safely
assume that $\psi\in C^1(\overline {\Om^t})$. As $\Div v^t=0$ by~\eqref{divvt}, we immediately
obtain that
\begin{equation}\label{grad}
\int_{\Om^t} v^t\cdot \nabla\psi\, dx=\int_{\pd\Om^t} N^t\cdot
v^t\restrt\, \psi\, dx=0
\end{equation}
where we have also used~\eqref{Nv}.

To tackle the second integral, let us denote by
$\{h_j^t\}_{j=1}^{b_1}$ a basis of the space~$\Harmt$ depending
smoothly on the parameter~$t$. We recall that the dimension
of~$\Harmt$ is independent of~$t$ and given by the first Betti
number~$b_1$ of the domain~$\Om$. Since $\curl h^t_j=0$ on~$\Om^t$ for
all~$t$, the derivative of $h^t_j$ with
respect to~$t$ also satisfies the equation
\[
\curl \pd_t h^t_j=0
\]
on~$\Om^t$, so one write it as the sum
\begin{equation}\label{dec1}
\pd_t h^t_j= H^t_j + \nabla\psi^t_j
\end{equation}
of a harmonic field
\begin{equation}\label{dec2}
H^t_j=\sum_{k=1}^{b_1} c_{jk}(t)\, h_k^t \in \Harmt
\end{equation}
and the gradient of a scalar
function $\psi_j^t\in H^1(\Om^t)$.

It then follows that the time
derivative of
\[
f_j(t):=\int_{\Om^t} v^t\cdot h^t_j\, dx
\]
is of the form
\begin{align*}
f_j'(t)&= \int_{\Om^t}
         v^t\cdot (\pd_th^t_j)\, dx +\int_{\Om^t} (\pd_tv^t)\cdot h^t_j\, dx + \int_{\Om^t} V\cdot\nabla(v^t\cdot h^t_j)\, dx\,,
\end{align*}
where the last term (which corresponds to the so-called material
derivative) arises from the fact that the domain~$\Om^t$ moves
along the flow of~$V$. The first term can be readily computed
using~\eqref{dec1}-\eqref{dec2}:
\begin{align*}
\int_{\Om^t}
         v^t\cdot (\pd_th^t_j)\, dx&= \sum_{k=1}^{b_1}c_{jk}(t)\int_{\Om^t}
         v^t\cdot h^t_k\, dx + \int_{\Om^t}
         v^t\cdot \nabla\psi_j^t\, dx\\
&= \sum_{k=1}^{b_1}c_{jk}(t)\, f_k(t) \,.
\end{align*}
Here we have used that~$v^t$ is orthogonal to all gradients by
Equation~\eqref{grad}. Using~\eqref{curlv} and the fact that $\Div
v^t=\Div V= 0$, the second and third terms yield
\begin{align*}
I&:=\int_{\Om^t} [(\pd_tv^t)\cdot h^t_j+
V\cdot\nabla(v^t\cdot h^t_j)]\, dx\\
&= \int_{\Om^t}\curl (V\times
                          v^t)\cdot h_j^t\, dx + \int_{\pd\Om^t} (V\cdot
                          N^t)\, (v^t\restrt\cdot h^t_j\restrt)\, dS\,,
\end{align*}
where we know that the boundary term is well defined because
\[
v^t\restrt= \Phi^t_*(u_1\restr)
\]
with $u_1\restr\in C^{1,\al}(\pd\Om)$. Integrating by parts in the first integral and
using Equation~\eqref{Nv} and that $h^t_j$ is curl-free, we
obtain
\begin{align*}
\int_{\Om^t}\curl (V\times
                          v^t)\cdot h_j^t\, dx  = -\int_{\pd\Om^t} (V\cdot
                          N^t)\, (v^t\restrt\cdot h^t_j\restrt)\, dS\,,
\end{align*}
which implies that $I=0$. Hence, putting everything together, for all
$1\leq j\leq b_1$ we have
\[
f'_j(t)=\sum_{k=1}^{b_1}c_{jk}(t)\, f_k(t)\,.
\]
Since $f_k(0)=0$, we infer that $f_k(t)=0$ for all~$t$ and
all~$k$. This completes the proof that $v^t\in \cK(\Om^t)$ for
all~$|t|<\ep_0$.

Since $\curl$ defines a self-adjoint operator on~$\cK(\Om^t)$ with
domain~$\cD_{\Om^t}$, let us denote by $T^t$ its inverse, which is a
compact operator on~$\cK(\Om^t)$. Let us denote by $u_k^t$ and~$\mu_k^t$ ($k\in\ZZ\backslash\{0\}$)
an orthonormal basis of eigenfunctions of the curl operator
on~$\cD_{\Om^t}$ and the corresponding eigenvalues. Expanding an arbitrary vector field
$w\in\cK(\Om^t)$ as
\[
w=\sum_{k\in\ZZ\backslash\{0\}} w_k\, u_k^t\,,
\]
with $w_k\in\RR$, it follows that
\[
\langle T^tw,w\rangle =\sum_{k\in\ZZ\backslash\{0\}}
\frac{w_k^2}{\mu_k^t}\,,\qquad
\|w\|^2=\sum_{k\in\ZZ\backslash\{0\}} w_k^2\,,
\]
where the inner product and the norm are obviously those of~$L^2(\Om^t)$.
Hence for all~$w\in \cK(\Om^t)$,
\[
\frac1{\mu_{-1}^t}\leq \frac{\langle T^tw,w\rangle}{\|w\|^2}\leq \frac1{\mu_1^t}\,,
\]
and these inequalities are sharp because they are saturated for
$w=u_1^t$ or $w=u_{-1}^t$.

It follows from the above argument that if~$\Om$ is a locally optimal domain
for the first positive curl eigenvalue, then the time derivative of
the function
\[
R(t):=\frac{\langle T^tv^t,v^t\rangle}{\|v^t\|^2}
\]
must satisfy $R'(0)=0$. Notice that $R(0)=1/\mu_1$. Although it is not obvious a priori, in the following computations we shall prove that $R(t)$ is smooth in $t$, which justifies to take the time derivative at $t=0$.

First, using Equation~\eqref{curlv}, the time
derivative at~0 of the denominator (which is clearly smooth in $t$ because $v^t$ is) is readily shown to be
\begin{align}
\pd_t|_{t=0} \|v^t\|^2&= 2\langle u_1,\pd_t v^t|_{t=0}\rangle\notag+\int_{\pd\Om}(V\cdot N)|u_1|^2\restr \\
&= 2\int_\Om u_1\cdot \curl (V\times u_1)\, dx\notag+\int_{\pd\Om}(V\cdot N)|u_1|^2\restr \\
&=2\int_\Om \curl u_1\cdot (V\times u_1)\, dx - \int_{\pd\Om}(V\cdot
  N)\, |u_1|^2\restr\, dS\notag\\
&=- \int_{\pd\Om}(V\cdot
  N)\, |u_1|^2\restr\, dS\,,\label{denom}
\end{align}
where we have used that $\curl u_1=\mu_1u_1$.

The computation of the time derivative of the numerator is more
involved. We start by defining the $L^2$-valued 1-form associated to
$T^tv^t$ via the Euclidean metric, which we denote
by~$\al^t$. Notice that
\begin{align}
d(\al^t-\Phi^t_* \al^0)&=d\al^t-\Phi^t_* d\al^0\notag\\
&= i_{\Phi^t_* u_1}(dx_1\wedge dx_2\wedge dx_3)- \Phi^t_*
  i_{u_1}(dx_1\wedge dx_2\wedge dx_3)\notag\\
&= i_{\Phi^t_* u_1}(dx_1\wedge dx_2\wedge dx_3)-
  i_{\Phi^t_* u_1} [\Phi^t_* (dx_1\wedge dx_2\wedge dx_3)]\notag \\
&=0\,,\label{cerrada}
\end{align}
where $\wedge$, $d$ and $i_W$ respectively denote the exterior
product, the differential and the contraction with the vector
field~$W$. We
have also employed that the push-forward commutes with the exterior
derivative and the volume 3-form $dx_1\wedge dx_2\wedge dx_3$ is
invariant under the flow $\Phi^t$ (that is, $\Phi^t_* (dx_1\wedge
dx_2\wedge dx_3) = dx_1\wedge dx_2\wedge dx_3$ for $|t|<\ep_0$) because~$V$ is divergence-free in a neighborhood of $\overline\Om$. Furthermore,
we have applied to~$Y:=T^tv^t$ the well-known formula
\[
i_{\curl Y}(dx_1\wedge dx_2\wedge dx_3)= d\be\,,
\]
where $\be$ is the 1-form dual to the vector field~$Y$, and used that $\curl T^tv^t=v^t$.

We can now write the numerator of the function~$R(t)$ as
\begin{align*}
g(t):=\langle T^tv^t,v^t\rangle = \int_{\Om^t} \al^t\wedge d\al^t\,.
\end{align*}
By the Hodge
decomposition theorem, Equation~\eqref{cerrada} implies that
there is a 1-form $\be^t$, dual to a vector field $h^t\in\Harmt$, and
a function $\psi^t\in H^1(\Om^t)$ such that
\[
\al^t= \Phi^t_*\al^0 + \be^t + d\psi^t\,.
\]
One then has
\begin{align*}
g(t)=\int_{\Om^t} \Phi^t_*\al^0\wedge d\al^t + \int_{\Om^t}
  \be^t\wedge d\al^t+ + \int_{\Om^t} d\psi^t\wedge d\al^t \,.
\end{align*}
The second term is very easy to compute, since by the definition of
the various 1-forms one has
\begin{align*}
  \int_{\Om^t}  \be^t\wedge d\al^t= \int_{\Om^t} h^t\cdot \curl (T^t
                                    v^t)\, dx = \int_{\Om^t} h^t\cdot
  v^t\, dx=0
\end{align*}
because $v^t\in \cK(\Om^t)$, and the third term is similar:
\begin{align*}
  \int_{\Om^t}  d\psi^t\wedge d\al^t= \int_{\Om^t} \nabla\psi^t\cdot \curl (T^t
                                    v^t)\, dx = \int_{\Om^t} \nabla\psi^t\cdot
  v^t\, dx=0\,.
\end{align*}
Hence $g(t)$ coincides with the first summand, which can be rewritten
using~\eqref{cerrada} as
\begin{align*}
g(t)&= \int_{\Om^t} \Phi^t_*\al^0\wedge d(\Phi^t_*\al^0)\\
&=
      \int_{\Om^t} \Phi^t_*(\al^0\wedge d\al^0)\\
&= \int_\Om \al^0\wedge d\al^0\\
&=\int_\Om u_1\cdot T^0u_1\, dx\\
&=\frac1{\mu_1}\,.
\end{align*}

This shows that
\[
R(t)=\frac1{\mu_1\|v^t\|^2}\,,
\]
and in particular, $R(t)$ is smooth in $t$. The identity~\eqref{denom} and the fact that $\|u_1\|=1$ readily yields
\[
R'(0)=\frac 1{\mu_1} \int_{\pd\Om}(V\cdot
  N)\, |u_1|^2\restr\, dS\,.
\]
It is standard that this integral vanishes for any divergence-free
vector field $V\in C^\infty(\BOm)$ if and only if $|u_1|^2\restr$ is a constant $c$, the same on each connected component of~$\pd\Om$.

Finally, assume that $c=0$. It is then easy to check that the vector field
\[
u(x):=\begin{cases}
u_1(x) &\text{ if }\; x\in\Om\,,\\
0 &\text{ if }\; x\not\in\Om\,,
\end{cases}
\]
is in $H^1(\RR^3)$ and satisfies the equation $\curl u=\mu_1(\Om)u$ in $\RR^3$ in the sense of distributions. The Liouville theorem for Beltrami flows~\cite{Nadirashvili,CC} then implies that $u=0$ in $\RR^3$, which is a contradiction, so we conclude that $c>0$. Analogous arguments work for the first negative curl eigenvalue, and the proposition then follows.
\end{proof}

\begin{remark}
Although we will only consider $C^2$ domains in this article, Proposition~\ref{T.necessary} also holds for Lipschitz optimal domains, where all the boundary restrictions have to be understood in the sense of traces.
\end{remark}

\begin{corollary}\label{C.geod}
If $\Om$ is a $C^{2,\alpha}$ locally optimal domain for the first
positive curl eigenvalue then each connected component of $\pd\Om$ is diffeomorphic to $\mathbb T^2$ and the (unparameterized) integral curves of $u_1\restr$ are geodesics with respect to the induced metric. The analogous statement holds for the first negative eigenvalue. In particular, there are no $C^{2,\alpha}$-smooth locally optimal domains that are convex.
\end{corollary}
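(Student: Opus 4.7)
The plan is to deduce the three conclusions in sequence from Proposition~\ref{T.necessary}, using only the nonvanishing of $u_1\restr$, the Beltrami equation, and a standard vector-calculus identity.

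First, I would fix a connected component $S$ of $\pd\Om$. By Proposition~\ref{T.necessary} the pointwise norm $|u_1|^2$ is constant on $\pd\Om$ and equal to some $c>0$, so in particular $|u_1|\restr\neq 0$ on $S$. Because any eigenfield of curl in $\Dom$ satisfies $u_1\cdot N\restr=0$, the restriction $u_1\restr$ is a $C^{1,\al}$ tangent vector field on $S$ that never vanishes. Since $S$ is a closed orientable surface, the Poincar\'e--Hopf theorem forces $\chi(S)=0$, hence $S\cong \TT^2$. The same argument applied to each connected component gives the first claim.

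Next I would verify the geodesic property using the identity
\[
(u_1\cdot\nabla)u_1=\tfrac12\nabla|u_1|^2-u_1\times\curl u_1.
\]
The Beltrami equation $\curl u_1=\mu_1 u_1$ makes the cross product vanish, so $(u_1\cdot\nabla)u_1=\tfrac12\nabla|u_1|^2$. On $\pd\Om$ the function $|u_1|^2=c$ is constant, hence its tangential gradient vanishes, which means $\nabla|u_1|^2\restr$ is parallel to the unit normal $N$. Because $|u_1|=\sqrt c$ is constant on $S$, an integral curve of $u_1\restr$ admits the arc-length parametrization $\dot\ga=u_1/\sqrt c$, and its Euclidean acceleration
\[
\ddot\ga=\frac{1}{c}(u_1\cdot\nabla)u_1\big|_\ga=\frac{1}{2c}\nabla|u_1|^2\big|_\ga
\]
is normal to $S$. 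This is exactly the condition that $\ga$ be a geodesic for the metric induced on $S\subset\RR^3$, so the (unparametrized) integral curves of $u_1\restr$ are geodesics. The argument for the first negative eigenvalue is identical, since the Beltrami identity and the constancy of $|u_{-1}|^2\restr$ are the only inputs.

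Finally, the nonexistence of convex optimal domains follows with no extra work: a bounded convex domain with $C^{2,\al}$ boundary has $\pd\Om$ connected and diffeomorphic to~$\SS^2$, so $\chi(\pd\Om)=2\neq 0$, contradicting the first part of the corollary. The only mildly delicate point in the whole argument is ensuring that the constant $c$ on each boundary component is strictly positive, but that is already established in the proof of Proposition~\ref{T.necessary} via the Liouville theorem for Beltrami fields; no further obstacle arises.
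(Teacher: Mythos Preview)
Your proof is correct and follows essentially the same route as the paper: Poincar\'e--Hopf applied to the nowhere-vanishing tangent field $u_1\restr$ forces each boundary component to be a torus, the identity $(u_1\cdot\nabla)u_1=\tfrac12\nabla|u_1|^2-u_1\times\curl u_1$ combined with the Beltrami condition and the constancy of $|u_1|^2\restr$ shows the acceleration is normal to $\pd\Om$, and the convex case is ruled out because $\pd\Om\cong\SS^2$. Your presentation of the geodesic step via the arc-length parametrization is slightly more explicit than the paper's, which simply notes that the tangential covariant derivative $\nabla_v v$ vanishes, but the content is the same.
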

\begin{proof}
Let $\Si_k$ be a connected component of $\pd\Om$. If the Euler characteristic $\chi(\Si_k)\neq 0$, then $u_1{\!\upharpoonright_{\Si_k}}$ vanishes at some point by the Poincar\'e--Hopf index theorem. In view of Proposition~\ref{T.necessary} we conclude that $c=|u_1|^2\restr=0$, which is a contradiction, so we deduce that $\chi(\Si_k)=0$ for all the connected components of $\pd\Om$, which means that they are diffeomorphic to $\mathbb T^2$. Now, the well-known identity
\[
\nabla_uu=\frac12\nabla|u|^2-u\times \curl u
\]
implies that $\nabla_{u_1} u_1=\frac12\nabla|u_1|^2$. Since $u_1$ is $C^{1,\alpha}$ up to the boundary and $|u_1|^2\restr=c$, the restriction $v:=u_1\restr$ on $\pd\Om$ satisfies $\nabla_v v=0$, where $\nabla_v$ is the covariant derivative along $v$ with respect to the induced metric. This is equivalent to saying that the (unparametrized) integral curves of $v$ are geodesics. The last claim in the statement follows from the fact that the boundary of any $C^{2,\al}$ convex domain is diffeomorphic to a sphere.
\end{proof}

\section{Proof of Theorem~\ref{T.main}}
\label{S.proof}

Let $\Om$ be an axisymmetric $C^{2,\alpha}$ locally optimal domain for the first positive curl eigenvalue and consider an eigenfield $u_1$ as in Proposition~\ref{T.necessary}. Corollary~\ref{C.geod} implies that all the connected components of $\pd\Om$ are diffeomorphic to $\mathbb T^2$, and hence $\de_\Om>0$, i.e., $\Om$ does not intersect the $z$-axis. The domain $\Om$ is then of the form $D\times \TT$ where $D\subset\RR\times (0,\infty)$ is the section of $\Om$. We assume in what follows that $\pd\Om$ (and so $\pd D$) is connected.

Assume that the eigenfield $u_1$ is axisymmetric. It can then be written in cylindrical coordinates (in terms of the orthonormal basis $\{e_z,e_r,e_\vp\}$) as
\begin{equation}\label{defu}
u_1= \frac1r\left[ \pd_r\psi\, e_z-\pd_z\psi\, e_r +\mu_1\psi\, e_\vp\right]\,,
\end{equation}
where $\mu_1\equiv \mu_1(\Om)$ is the first positive curl eigenvalue, and the function $\psi(z,r)$ satisfies the Grad-Shafranov equation
\begin{equation}\label{elliptic}
L\psi=-\mu_1^2\psi
\end{equation}
in the section $D$. In particular, $\psi$ belongs to $C^{2,\al}(\overline D)$. Here we have set
\begin{equation}\label{defL}
L\psi:=\pd_{zz}\psi+\pd_{rr}\psi - \frac1r\pd_r\psi\,.
\end{equation}
Since $u_1$ is tangent to $\pd\Om$ and $|u_1|^2\restr$ is constant, it follows that $\psi$ satisfies the following boundary conditions
\begin{align}
  \psi|_{\pd D}=c_1\,,\label{Dirichlet}\\
   \left.\frac{(\nabla\psi)^2+ \mu_1^2c_1^2}{r^2}\right|_{\pd D}=c_2\,,\label{Neumann}
\end{align}
for some constants $c_1$ and $c_2>0$.

Since $\pd D$ is connected, $D$ is a simply connected planar domain, so $\Om$ is diffeomorphic to a solid torus. The space of harmonic fields $\cH_\Om$ then has dimension one, and it is trivial to check that the only harmonic field (up to a constant factor) is given by
\begin{equation}\label{eq.harm}
h=\frac1r\,e_\vp\,.
\end{equation}
The fact that $u_1\in \cK(\Om)$ implies that
\[
0=\int_\Om u_1\cdot h dx=2\pi\mu_1\int_{D}\frac{\psi(z,r)}{r}\,dzdr\,.
\]
Using Equation~\eqref{elliptic}, this yields
\begin{equation}\label{eq.harm}
0=\int_{D}\frac{L\psi}{r}\,dzdr=\int_{D}\Big[\partial_z\Big(\frac{\partial_z\psi}{r}\Big)+\partial_r\Big(\frac{\partial_r\psi}{r}\Big)\Big]\,dzdr=
\int_{\pd D}\frac{\nabla\psi\cdot N}{r}dS\,.
\end{equation}
Here $\nabla\psi:=\partial_z\psi e_z+\pd_r\psi e_r$ and $N$ is the outward-point unit normal to $\pd D$. To pass to the last equality we have simply integrated by parts.

If $c_1=0$ in Equation~\eqref{Dirichlet}, then~\eqref{Neumann} and~\eqref{eq.harm} imply that $c_2=0$, which means that $|u_1|^2\restr=0$, which contradicts Proposition~\ref{T.necessary}. Let us now consider the case $c_1\neq 0$. The connectedness of $\pd D$ and the fact that $\nabla\psi\restrr$ cannot be identically zero, imply that Equation~\eqref{eq.harm} can be fulfilled only if the zero set of $\nabla\psi$ on $\pd D$ is nonempty and consists of at least two connected components. Let us characterize the zero set $Z$ of $\nabla\psi\restrr$. Take a point $(r_*,z_*)\in Z$ and assume that there is a point $(r_0,z_0)\in\pd D$ with $r_0<r_*$; then Equation~\eqref{Neumann} implies that
\[
\frac{\mu_1^2c_1^2}{r_*^2}=c_2\,,
\]
and
\[
c_2=\frac{(\nabla \psi)^2|_{(r_0,z_0)}+\mu_1^2c_1^2}{r_0^2}>\frac{(\nabla \psi)^2|_{(r_0,z_0)}+\mu_1^2c_1^2}{r_*^2}=\frac{(\nabla \psi)^2|_{(r_0,z_0)}}{r_*^2}+c_2\,,
\]
which is a contradiction. We hence conclude that $Z\subset\cR_{D}$, i.e. the set of points on $\pd D$ that are closest to the $z$-axis, and being $Z$ nonempty, the inclusion $\cR_{D}\subset Z$ also follows from Equation~\eqref{Neumann}. This shows that $\cR_{D}$ and so $\cR_\Om$ consists of at least two connected components if $\Om$ is a locally optimal domain, thus proving the second claim in Theorem~\ref{T.main} provided that $u_1$ is axisymmetric.

In general, an eigenfield $u_1$ associated to $\mu_1$ does not need to be axisymmetric and reads in cylindrical coordinates as
\[
u_1= u_{1z}\, e_z+u_{1r}\, e_r +u_{1\vp}\, e_\vp\,,
\]
where $u_{1z},u_{1r},u_{1\vp}$ are functions of $(z,r,\vp)\in\Om$. Let us now define the axisymmetric vector field
\[
u_1^S:=Su_{1z}\, e_z+Su_{1r}\, e_r +Su_{1\vp}\, e_\vp\,,
\]
where the axisymmetrization operator $S:C^{k,\al}(\Om)\to C^{k,\al}(\Om)$ is given by
\[
Sf(z,r):=\frac1{2\pi}\int_0^{2\pi}f(z,r,\vp)\,d\vp\,.
\]
Since $\Om$ is axisymmetric, it is easy to check that $\curl u_1^S=\mu_1 u_1^S$ and $u_1^S\in \cK(\Om)$, so $u_1^S$ is also an eigenfield of curl with eigenvalue $\mu_1$. We claim that $u_1^S$ is not identically zero on $\Om$.

Indeed, assume that there is a point $p_0=(z_0,r_0,\vp_0)\in\pd\Om$ such that $u_{1\vp}(p_0)=0$. Then $u_1\restr(p_0)$ is tangent to the meridian of $\pd\Om$ passing by the point $p_0$. Noticing that the (unparametrized) integral curves of $u_1\restr$ are geodesics on $\pd\Om$ by Corollary~\ref{C.geod}, and the fact that the meridian circles of an axisymmetric surface are geodesics, it immediately follows that the meridian $\ga_0$ on $\pd\Om$ corresponding to the point $p_0$ is an (unparametrized) integral curve of $u_1\restr$. Since $|u_1|^2\restr=c>0$, calling $D_0$ the disk $\{\vp=\vp_0\}$ in $\Om$ bounded by $\ga_0$, we can write
\begin{align*}
0\neq& \int_{\ga_0}u_1\,dl=\int_{D_0}\curl u_1\cdot N\,dS=\mu_1\int_{D_0} u_1\cdot N dzdr=\frac{\mu_1}{2\pi}\int_\Om u_1\cdot Ndzdrd\vp
\\&=\frac{\mu_1}{2\pi}\int_{\Om}u_{1\vp}dzdr=\frac{\mu_1}{2\pi}\int_{\Om}u_1\cdot h dx=0\,.
\end{align*}
Here $N=e_\vp$ is a unit vector normal to $D_0$ and $h$ is the unique harmonic field, cf.~Equation~\eqref{eq.harm}, in $\Om$. In the first equality we have used Stokes theorem and to pass from the integral on $D_0$ to an integral on $\Om$ we have used that $u_1$ is divergence-free and hence its flux through any disk $\{\vp=\vp_0\}$ does not depend on the angle. Since this equation yields a contradiction, we conclude that the component $u_{1\vp}$ does not vanish at any point of $\pd\Om$. Accordingly, the axisymmetric vector field $u_1^S$ cannot be identically zero, as claimed.

Summarizing, we have proved that for any optimal axisymmetric domain there exists a nontrivial axisymmetric curl eigenfield $u_1^S$ associated with the first positive curl eigenvalue $\mu_1$. The theorem then follows applying the previous discussion to the field $u_1^S$. The case of the first negative curl eigenvalue is completely analogous.

We conclude this section with a corollary that establishes that the first positive (or negative) curl eigenvalue of a locally optimal axisymmetric domain is simple. Notice that this is a very special property of optimal domains, because the first curl eigenvalue of a general bounded domain does not need to be simple (in contrast with the case of the Dirichlet Laplacian).

\begin{corollary}\label{C.simple}
Let $\Om$ be an axisymmetric bounded domain with $C^{2,\al}$ connected boundary. If it is locally optimal for the first positive curl eigenvalue $\mu_1(\Om)$, then this eigenvalue is simple and the corresponding curl eigenfield $u_1$ is axisymmetric. The same result holds if the domain is locally optimal for the first negative curl eigenvalue.
\end{corollary}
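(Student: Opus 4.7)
The plan is to extract both claims directly from the arguments already assembled in the proof of Theorem~\ref{T.main}, after observing that the cylindrical-coordinate averaging operator $S$ is a projection onto the axisymmetric sector of the eigenspace associated with $\mu_1(\Om)$, which I denote by $E_{\mu_1}(\Om)\subset\cK(\Om)$.

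First I would argue that every eigenfield $u\in E_{\mu_1}(\Om)$ is axisymmetric. For this, I revisit the paragraph in the proof of Theorem~\ref{T.main} where the $\vp$-component $u_\vp$ of an eigenfield is shown not to vanish at any point of $\pd\Om$. The contradiction derived there from a hypothetical zero $p_0\in\pd\Om$ of $u_\vp$ uses only (a) the geodesic property from Corollary~\ref{C.geod}, which identifies the meridian through $p_0$ as an integral curve of $u\restr$, (b) Stokes' theorem, and (c) the orthogonality $\int_\Om u\cdot h\,dx=0$ with the unique harmonic field $h=e_\vp/r$; none of these steps uses that $u$ is axisymmetric. By the connectedness of $\pd\Om$, the nonvanishing of $u_\vp$ on $\pd\Om$ implies that $u_\vp$ has constant sign there, so $Su_\vp\not\equiv 0$ on $\pd D$ and hence $u^S\not\equiv 0$ in $\Om$. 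Because $S$ is component-wise averaging in cylindrical coordinates it satisfies $S^2=S$, and because $\Om$ is axisymmetric it preserves $E_{\mu_1}(\Om)$. This yields the direct-sum decomposition $E_{\mu_1}(\Om)=\ker(S|_{E_{\mu_1}})\oplus E_{\mu_1}^{\mathrm{axi}}(\Om)$, where $E_{\mu_1}^{\mathrm{axi}}(\Om)$ is the subspace of axisymmetric eigenfields. The nonvanishing just shown forces $\ker(S|_{E_{\mu_1}})=\{0\}$, so $E_{\mu_1}(\Om)=E_{\mu_1}^{\mathrm{axi}}(\Om)$.

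Having reduced to the axisymmetric case, I would deduce simplicity by a short linear-algebra argument. Suppose for contradiction that there are two linearly independent axisymmetric eigenfields $u^{(1)},u^{(2)}\in E_{\mu_1}(\Om)$, with Grad--Shafranov stream functions $\psi^{(1)},\psi^{(2)}\in C^{2,\al}(\overline D)$ satisfying $L\psi^{(i)}=-\mu_1^2\psi^{(i)}$ in $D$ and $\psi^{(i)}\restrr=c_1^{(i)}$ constant. The axisymmetric part of the proof of Theorem~\ref{T.main} (combining the Neumann-type condition $(|\nabla\psi^{(i)}|^2+\mu_1^2(c_1^{(i)})^2)/r^2\restrr=c_2^{(i)}$ from Proposition~\ref{T.necessary}, the harmonic-field constraint, and the Liouville theorem for Beltrami fields) shows that $c_1^{(i)}=0$ would force $u^{(i)}\equiv 0$, so both $c_1^{(i)}$ are nonzero. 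I then consider the axisymmetric eigenfield $u:=u^{(1)}-(c_1^{(1)}/c_1^{(2)})u^{(2)}\in E_{\mu_1}(\Om)$, whose stream function $\psi:=\psi^{(1)}-(c_1^{(1)}/c_1^{(2)})\psi^{(2)}$ vanishes identically on $\pd D$. Running the same axisymmetric step of the theorem's argument one more time yields $u\equiv 0$, so $\psi^{(1)}$ and $\psi^{(2)}$ are proportional, contradicting linear independence. Hence $\dim E_{\mu_1}(\Om)=1$.

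The case of the first negative curl eigenvalue is handled identically. The hardest part is really organizational rather than technical: one must carefully isolate, inside the proof of Theorem~\ref{T.main}, which steps require the eigenfield to be axisymmetric (the Grad--Shafranov reduction and the analysis ruling out $c_1=0$) and which apply to any eigenfield (the nonvanishing of $u_\vp$ on $\pd\Om$). Once that separation is explicit, the corollary falls out as a linear-algebra consequence of the theorem's argument: axisymmetry of every eigenfield follows from $S$ being a projection with trivial kernel on $E_{\mu_1}(\Om)$, and simplicity follows from applying the axisymmetric step of the theorem's argument to a suitably chosen linear combination.
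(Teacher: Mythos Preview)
Your proof is correct, but the route to simplicity is genuinely different from the paper's.

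For the axisymmetry part, you and the paper use the same ingredient --- the fact, established in the proof of Theorem~\ref{T.main}, that $u_\vp$ does not vanish on $\pd\Om$ for any nonzero eigenfield, hence $Su\neq 0$ --- but you package it differently: you observe that $S^2=S$ and $\ker(S|_{E_{\mu_1}})=\{0\}$ force $S$ to be the identity on $E_{\mu_1}(\Om)$, so every eigenfield is axisymmetric \emph{before} simplicity is known. The paper instead establishes simplicity first and then reads off axisymmetry from the one-dimensionality of $E_{\mu_1}(\Om)$ together with $Su_1\neq 0$.

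For simplicity, the arguments diverge. The paper works with two axisymmetrized eigenfields $Su_1,Sv_1$ and shows, via Proposition~\ref{T.necessary}, that the angle between them is constant along $\pd\Om$; since both are tangent to $e_\vp$ at the innermost points $\cR_\Om$ (where $\nabla\psi$ and $\nabla\tilde\psi$ vanish), they are collinear there and hence collinear everywhere on $\pd\Om$, which produces a combination vanishing on $\pd\Om$ and then, by the Liouville step, in all of $\Om$. Your argument bypasses this geometric detour: you simply form the combination whose stream function has $c_1=0$ on $\pd D$ and feed it straight into the ``$c_1=0\Rightarrow c_2=0\Rightarrow u\equiv 0$'' step already carried out in the proof of Theorem~\ref{T.main}. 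This is shorter and more elementary --- it needs only the linearity of the stream-function representation and the observation that each $c_1^{(i)}\neq 0$ --- while the paper's angle argument, though more involved, makes the geometric rigidity on $\pd\Om$ more explicit.
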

\begin{proof}
Since $\Om$ is locally optimal and $C^{2,\al}$, Theorem~\ref{T.main} implies that $\Om=D\times\TT$ for some section $D$ whose closure is contained in $\RR\times(0,\infty)$. Let $u_1$ and $v_1$ be two linearly independent eigenfields associated to $\mu_1$ and consider their axisymmetrizations $S u_1$ and $S v_1$ as defined above. Recall that we have proved that the axisymmetrization of any curl eigenfield of $\mu_1$ is another nontrivial curl eigenfield. It follows from Proposition~\ref{T.necessary} that any linear combination $aSu_1+bSv_1$, $a,b\in\RR$, has constant pointwise norm on $\pd\Om$, i.e.
$$|aSu_1+bSv_1|^2\restr=c(a,b)\,,$$
a constant that may depend on $a$ and $b$. It is then easy to check that the angle $\Theta$ formed by the vectors $Su_1(x)$ and $Sv_1(x)$ does not depend on the point $x\in\pd\Om$.

The fields $Su_1$ and $Sv_1$ can be written as in Equation~\eqref{defu} for some functions $\psi$ and $\tilde\psi$ on $D$, respectively. Since $\pd\Om$ is connected, the proof of Theorem~\ref{T.main} also shows that $\nabla\psi\restrr$ and $\nabla\tilde\psi\restrr$ vanish exactly on the same set $\cR_D$. In view of Equation~\eqref{defu}, the fields $Su_1\restr$ and $Sv_1\restr$ are then collinear at any point $p\in\cR_\Om$ (the are tangent to the rotation field $e_\varphi$), so from the argument above we conclude that they are collinear everywhere on $\pd\Om$. The fact that they have constant pointwise norm on $\pd\Om$ hence implies that there are constants $a_0,b_0$ such that the curl eigenfield $a_0Su_1+b_0Sv_1$ satisfies
\[
(a_0Su_1+b_0Sv_1)\restr =0\,.
\]
In view of Proposition~\ref{T.necessary}, it follows that $a_0Su_1+b_0Sv_1=0$ in $\Om$, which means that the axisymmetrization of the curl eigenfield $a_0u_1+b_0v_1$ is identically zero. Since we proved above that this cannot happen unless the linear combination $a_0u_1+b_0v_1$ is zero itself, we finally conclude that the eigenvalue $\mu_1$ is simple. The same axisymmetrization argument also shows that the corresponding eigenfield $u_1$ must be axisymmetric.
\end{proof}

\section*{Acknowledgements}

A.E.\ is supported by the ERC Starting Grant~633152. D.P.-S.
is supported by the grants MTM PID2019-106715GB-C21 (MICINN) and Europa Excelencia EUR2019-103821 (MCIU). This work is supported in part by the ICMAT--Severo Ochoa grant SEV-2015-0554 and the CSIC grant 20205CEX001.

\appendix

\section{A uniform lower bound for the first curl eigenvalue}
\label{Appendix}

In this section we show that both the first positive and negative curl eigenvalues are lower bounded by a constant that only depends on the volumen of the domain. However, this is rather different from the Faber--Krahn inequality for the Dirichlet Laplacian because here the bound is far from sharp, and probably it cannot be achieved. A similar bound (but upper instead of lower) holds for the helicity maximization problem considered in~\cite[Theorem E]{Cantarella}.

\begin{theorem}\label{T.FK}
For any bounded $C^2$ domain $\Om\subset\RR^3$,
\[
\min\{\mu_1(\Om),-\mu_{-1}(\Om)\} \geq \bigg(\frac{4\pi}{3|\Om|}\bigg)^{1/3}\,.
\]
\end{theorem}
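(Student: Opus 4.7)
The plan is to invert curl explicitly on $\cK(\Om)$ via the Biot--Savart integral, and then bound its operator norm by the radius of the ball of the same volume as~$\Om$ using Schur's test together with a rearrangement inequality.

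First, I would introduce the Biot--Savart operator
\[
BS(u)(x):=\frac{1}{4\pi}\int_\Om \frac{u(y)\times(x-y)}{|x-y|^3}\,dy\,,\qquad x\in\RR^3,
\]
acting on $u\in L^2(\Om;\RR^3)$. When $u\in\cK(\Om)$ its extension by zero to $\RR^3$ is divergence-free in the distributional sense (because $\Div u=0$ in~$\Om$ and $u\restr\cdot N=0$), so a standard potential-theoretic computation gives $\curl BS(u)=u$ in~$\Om$ and $\Div BS(u)=0$ in $\RR^3$. Letting $P:L^2(\Om;\RR^3)\to\cK(\Om)$ denote the $L^2$-orthogonal projection, I claim that $P\circ BS$ inverts curl on $\cK(\Om)$. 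Indeed, for any curl eigenfield $u_k$ with eigenvalue $\mu_k$, the difference $BS(u_k)-u_k/\mu_k$ is curl-free in~$\Om$, hence of the form $\nabla\phi_k + h_k$ for some $\phi_k\in H^1(\Om)$ and some $h_k\in\Harm$ by the Hodge decomposition. Since $\cK(\Om)$ is $L^2$-orthogonal to gradients and harmonic fields by its very definition, applying~$P$ gives $P(BS(u_k))=u_k/\mu_k$, so
\[
\|P\circ BS\|_{\cK(\Om)\to\cK(\Om)}=\frac{1}{\min\{\mu_1(\Om),-\mu_{-1}(\Om)\}}.
\]

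Second, as orthogonal projections have norm~$1$, it suffices to bound $\|BS\|_{L^2(\Om)\to L^2(\Om)}$ from above by $R:=(3|\Om|/(4\pi))^{1/3}$. The pointwise bound $|BS(u)(x)|\leq (4\pi)^{-1}\int_\Om|u(y)|/|x-y|^2\,dy$ reduces the task to controlling the positive integral operator on~$L^2(\Om)$ with symmetric kernel $K(x,y):=(4\pi|x-y|^2)^{-1}$, for which Schur's test yields
\[
\|BS\|_{L^2(\Om)\to L^2(\Om)}\leq \sup_{x\in\Om}\int_\Om \frac{dy}{4\pi|x-y|^2}.
\]
To estimate this supremum I would invoke the Hardy--Littlewood rearrangement inequality (equivalently, the bathtub principle): as $y\mapsto|x-y|^{-2}$ is radial and radially decreasing about~$x$, replacing $\Om$ by the ball $B_R(x)$ of the same volume centered at~$x$ only increases the integral, and an explicit computation in spherical coordinates gives $\int_{B_R(x)}|x-y|^{-2}\,dy=4\pi R$. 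Combining these estimates yields $1/\min\{\mu_1,-\mu_{-1}\}\leq R$, which is precisely the statement of the theorem.

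The only step that requires genuine care is the first one, namely verifying that $P\circ BS$ is truly the inverse of curl on~$\cK(\Om)$ --- in particular, justifying the distributional identity $\curl BS(u)=u\chi_\Om$ and performing the Hodge decomposition up to~$\pd\Om$ --- but this is standard potential theory once the boundary conditions defining~$\cK(\Om)$ are taken into account. The subsequent norm estimates are then a clean combination of Schur's test and the rearrangement inequality.
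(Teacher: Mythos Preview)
Your proposal is correct and follows essentially the same route as the paper: invert curl on $\cK(\Om)$ via the Biot--Savart integral modulo the Hodge decomposition, then bound the resulting operator by applying Cauchy--Schwarz (equivalently Schur's test with weight~$1$) to the kernel $(4\pi|x-y|^2)^{-1}$ together with the rearrangement inequality to obtain the constant $(3|\Om|/4\pi)^{1/3}$. The only cosmetic difference is that the paper bounds the quadratic form $\langle\curl^{-1}v,v\rangle=\langle\BS v,v\rangle$ directly rather than the operator norm of $P\circ\BS$, but the estimates and constants are identical.
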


\begin{proof}
Since the curl operator is self-adjoint on the domain~$\Dom$, let us denote by $\curl ^{-1}$ the compact self-adjoint operator
on~$\cK(\Om)$ defined by its inverse. It is then clear that
\begin{equation}\label{app}
\frac{\|v\|^2}{\mu_{-1}}\leq \langle \curl ^{-1} v,v\rangle\leq \frac{\|v\|^2}{\mu_1}
\end{equation}
for all $v\in \cK(\Om)$, and that these inequalities are in fact
equalities when $v$ is $u_{-1}$ or $u_1$, respectively.

Given $v\in \Dom$, let us consider the vector field defined by~$v$ via the Biot--Savart integral
\[
\BS v(x):=\int_{\Om} \frac{v(y)\times (x-y)}{4\pi|x-y|^3}\, dy\,.
\]
It is standard (see e.g.~\cite{JMPA}) that, as $N\cdot v\restr = 0$,
\[
\Div \BS v=0\,,\qquad \curl \BS v=v
\]
in~$\Om$. Since $\BS v-\curl^{-1} v$ is curl-free, the Hodge decomposition theorem then
implies
\[
\curl^{-1}v = \BS v+ h_v+ \nabla \vp_v\,,
\]
where $h_v\in\Harm$ and $\vp_v$ is a scalar function in~$H^1(\Om)$.

Using this formula, we obtain that
\begin{align}
\langle \curl ^{-1} v,v\rangle = \langle \BS v,v\rangle + \langle
                                 h_v,v\rangle + \langle
  \nabla\vp_v,v\rangle = \langle \BS v,v\rangle\,,\label{curl}
\end{align}
where we have used that the other two terms vanish because a vector
field $v\in\Dom$ is orthogonal to the kernel of curl. Notice now
that
\begin{align}
|\BS v(x)| &= \bigg|\int_{\Om} \frac{v(y)\times (x-y)}{4\pi|x-y|^3}\,
             dy\bigg|\notag\\
& \leq \frac1{4\pi} \int_{\Om} \frac{|v(y)|}{|x-y|^2}\, dy\notag\\
&\leq \frac1{4\pi} \bigg(\int_{\Om}\frac{|v(y)|^2}{|x-y|^2}\,
  dy\bigg)^{1/2}
  \bigg(\int_{\Om}\frac{dy}{|x-y|^2}\bigg)^{1/2}\,.\label{BS}
\end{align}
Denoting by $\Om^*$ the ball centered at the origin whose volume
equals that of~$\Om$, the rearrangement inequality ensures that
\[
\sup_{x\in\Om}\int_{\Om}\frac{dy}{|x-y|^2}\leq
\sup_{z\in\Om^*}\int_{\Om^*}\frac{dy}{|z-y|^2}\, dy=
\int_{\Om^*}\frac{dy}{|y|^2}=  (48\pi^2|\Om|)^{1/3}\,.
\]
This estimate implies that
\begin{align*}
  \int_{\Om}\int_{\Om}\frac{|v(y)|^2}{|x-y|^2}\, dx\, dy & \leq \bigg(
                                                           \sup_{Y\in\Om}\int_{\Om}\frac{dx}{|x-Y|^2}\bigg)\bigg(
                                                           \int_{\Om}|v(y)|^2\,
                                                           dy\bigg)\\
&\leq  (48\pi^2|\Om|)^{1/3}\|v\|^2\,,
\end{align*}
so now we can go back to the inequality~\eqref{BS}, square it and integrate in $\Om$ to
estimate the $L^2$~norm of $\BS v$ as
\[
\|\BS v\|\leq  \bigg(\frac{3|\Om|}{4\pi}\bigg)^{1/3}\|v\|\,.
\]
By~\eqref{curl}, this yields
\[
|\langle \curl ^{-1} v,v\rangle|\leq  \bigg(\frac{3|\Om|}{4\pi}\bigg)^{1/3}\|v\|^2
\]
for all $v\in\Dom$. Since $\Dom$ is dense in $\cK(\Om)$ and
$\curl^{-1}$ is a bounded linear operator, it then follows that the
estimate holds for all $v\in \cK(\Om)$. In turn, this implies the eigenvalue estimate presented in the
statement of the theorem because the inequalities~\eqref{app} are
saturated when $v=u_1$ or $v=u_{-1}$, in each case.
\end{proof}

\bibliographystyle{amsplain}

\end{document}